\documentclass[11pt,reqno]{amsart}

\usepackage[T1]{fontenc}
\usepackage{lmodern}
\usepackage{amsmath,amssymb}
\usepackage{microtype}
\usepackage[hidelinks]{hyperref}

\newtheorem{theorem}{Theorem}[section]
\newtheorem{lemma}[theorem]{Lemma}
\numberwithin{equation}{section}
\DeclareMathOperator{\Ran}{Ran}
\DeclareMathOperator{\re}{Re}
\DeclareMathOperator{\im}{Im}

\title[The Fong--Tsui conjecture]{A proof of the Fong--Tsui conjecture}

\author{Mohamed Amine Aouichaoui}
\address{Department of Mathematics, Faculty of Sciences of Monastir, University of Monastir, 5019 Monastir, Tunisia}
\email{amine.aouichaoui@fsm.rnu.tn}

\author{Fuad Kittaneh$^{*}$}
\address{Department of Mathematics, The University of Jordan, Amman, Jordan}
\address{Department of Mathematics, Korea University, 02841 Seoul, South Korea}
\email{fkitt@ju.edu.jo}
\thanks{$^{*}$ Corresponding author.}

\author{Yicen Ma}
\address{Zhejiang University}
\email{3250103237@zju.edu.cn}

\thanks{Additional Note: Mohamed Amine Aouichaoui and Yicen Ma are co-first authors.
The author list is ordered alphabetically.}

\subjclass[2020]{Primary 47B15; Secondary 47A63}
\keywords{Self-adjoint operator, positive operator, operator inequality,
Fong--Tsui conjecture}

\begin{document}

\begin{abstract}
We prove that a bounded operator $T$ on a complex Hilbert space is
self-adjoint whenever $|T|\leq|\re T|$, as conjectured by Fong and Tsui \cite{FT}.
\end{abstract}

\maketitle

\section{Introduction}

Let $\mathcal H$ be a complex Hilbert space, and let $\mathcal B(\mathcal H)$
denote the algebra of bounded linear operators on $\mathcal H$. For
$T\in\mathcal B(\mathcal H)$, write
\[
 |T|=(T^*T)^{1/2},\qquad
 \re T=\frac{T+T^*}{2},\qquad
 \im T=\frac{T-T^*}{2i}.
\]
We use the usual order on self-adjoint operators and write $[S,R]=SR-RS$.

Fong and Tsui \cite[p.~74]{FT} conjectured that
\begin{equation}\label{eq:conjecture}
 |T|\leq|\re T|
\end{equation}
implies $T=T^*$. They proved this for matrices, compact operators,
and elements of finite von Neumann algebras
\cite[Theorem~5 and Remarks~2--3]{FT}. They also proved that
$|T|\leq\re T$ implies $T\geq0$.
Earlier, Fong and Istr\u{a}\c{t}escu \cite{FI} had shown that
$|T|^2\leq(\re T)^2$ implies $T=T^*$; see also Kittaneh \cite{K}.

Mortad \cite[Theorem~4]{M} proved the conjecture when $T^*T$ commutes
with $T+T^*$. Mbekhta and Suciu \cite{MS} proved it for partial isometries,
contractive $2$-quasi-isometries, and Brownian isometries of positive
covariance. Further sufficient conditions for binormal operators were
given by Stankovi\'c \cite{S}. To the best of our knowledge, the conjecture has remained open in its full generality. We prove the following theorem.

\begin{theorem}\label{thm:main}
Let $T\in\mathcal B(\mathcal H)$. If $|T|\leq|\re T|$, then $T=T^*$.
\end{theorem}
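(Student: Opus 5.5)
The plan is to imitate the finite-dimensional argument of Fong and Tsui, replacing the trace by a spectral argument that works in $\mathcal B(\mathcal H)$. Write $A=\re T$, $B=\im T$, $P=|T|$ and $Q=|A|$, so that $T=A+iB$ and the hypothesis reads $0\le P\le Q$. Expanding,
\[
P^2=T^*T=A^2+B^2+i[A,B],\qquad TT^*=A^2+B^2-i[A,B],
\]
and therefore $\tfrac12(T^*T+TT^*)=Q^2+B^2$. In the matrix case $P\le Q$ gives $s_j(T)\le s_j(A)$ for every $j$, hence $\operatorname{tr}P^2\le\operatorname{tr}Q^2$, which forces $\operatorname{tr}B^2\le0$ and so $B=0$. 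The difficulty in general is that $t\mapsto t^2$ is not operator monotone, so $P\le Q$ does not give $P^2\le Q^2$. If it did, the Fong--Istr\u{a}\c{t}escu theorem would finish the proof at once. The proof must therefore extract a quadratic consequence of a linear inequality without using a trace.

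\textbf{Step 1: the top of the spectrum.} Since $\|T\|=\|P\|\le\|Q\|=\|A\|$ and $\|T\|\ge w(T)\ge\|A\|$, we have $\|T\|=\|A\|=:m$. Choose unit vectors $x_n$ with $\langle Ax_n,x_n\rangle\to\pm m$. The inequality $\|Tx_n\|\le m$ together with $\re\langle Tx_n,x_n\rangle\to\pm m$ forces $Tx_n\mp m x_n\to0$. It follows that $Ax_n\mp mx_n\to0$ and $Bx_n\to0$. So $B$ vanishes approximately on the extreme part of the spectrum of $A$.

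\textbf{Step 2: descending the spectrum.} The aim is to push Step 1 down through the spectrum of $Q$. For $s>0$ let $E_s$ be the spectral projection of $Q$ for $[s,m]$. Using $P\le Q$ in the form $\langle Px,x\rangle\le\langle Qx,x\rangle$, and the identity
\[
Q^2-P^2=(Q-P)Q+P(Q-P),
\]
I would try to show that $\|B E_s\|\to 0$ fails to increase as $s$ decreases. The way to do this is an induction, or a supremum argument over $s$: if $s_0$ is the infimum of those $s$ for which $BE_s=0$, derive a contradiction unless $s_0=0$.

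\textbf{Step 3: the main obstacle.} The key technical step is the substitute for the inequality $\operatorname{tr}(E P^2E)\le\operatorname{tr}(EQ^2E)$ on spectral subspaces. My first attempt would be a Ky Fan-type variational statement: for every $k$ and every $k$-dimensional subspace $V$ lying approximately inside $\Ran E_s$, the quantity $\sum\langle P^2 v_i,v_i\rangle$ is bounded by $\sum\langle Q^2v_i,v_i\rangle+o(1)$. This should follow from $P\le Q$ via min--max, applied to the compressions of $P$ and $Q$ to finite-dimensional subspaces of approximate spectral vectors. Combined with $\tfrac12(P^2+|T^*|^2)=Q^2+B^2$ and the symmetric roles of $[A,B]$ (which cancel in finite sums over an approximately $A$-invariant subspace), this would give $\sum\langle B^2v_i,v_i\rangle\to0$, and hence $BE_s=0$.

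\textbf{Step 4: conclusion.} Letting $s\downarrow0$ gives $B=0$ on $\overline{\Ran Q}=\overline{\Ran A}$. On $\ker A$ we have $P\le Q=0$ there, so $T$ vanishes on $\ker A$. Together with self-adjointness of $T$ on the complement, this gives $B=0$ everywhere, i.e.\ $T=T^*$.

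I expect the cancellation of $[A,B]$ over non-invariant finite sections in Step 3 to be the hardest point. If that fails, I would pass to an ultrapower or Calkin-type representation in which the relevant spectral pieces become genuine eigenspaces. There I would attempt to reduce to the finite von Neumann algebra case proved in \cite{FT}.
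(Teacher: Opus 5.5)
Your Steps~1 and~4 are correct. Steps~2--3 carry the whole argument, and they do not work.

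\textbf{Steps 2--3 cannot descend the spectrum.} The induction in Step~2 has no base case. Step~1 only gives $Bx_n\to0$ along approximate eigenvectors at $\pm\|A\|$. That yields $BE_s=0$ for no $s<m$ (think of $|A|$ with purely continuous spectrum), and nothing is offered for passing from one level to a lower one.

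More seriously, the inequality $\sum_i\langle P^2v_i,v_i\rangle\le\sum_i\langle Q^2v_i,v_i\rangle+o(1)$ on subspaces of $\Ran E_s$ does not follow from $0\le P\le Q$. Take
\[
 Q=\begin{pmatrix}1&0\\0&q\end{pmatrix},\qquad
 P=\frac12\begin{pmatrix}1&\sqrt q\\ \sqrt q&q\end{pmatrix}.
\]
Both $P$ and $Q-P$ are positive of rank one, and $e_1$ is an eigenvector of $Q$. Yet $\|Pe_1\|^2=(1+q)/4>1=\|Qe_1\|^2$ once $q>3$.

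What min--max actually gives is the Ky Fan--Weyl bound $\sum_{i\le k}\langle P^2v_i,v_i\rangle\le\sum_{j\le k}\mu_j(Q)^2$, where the $\mu_j$ are the min--max values. This bound matches $\sum_i\langle Q^2v_i,v_i\rangle$ only when $V$ nearly attains the top $k$ of these values. For infinite-dimensional $\mathcal H$, those values are the eigenvalues of $Q$ above $\sup\sigma_{\mathrm{ess}}(Q)$, followed by $\sup\sigma_{\mathrm{ess}}(Q)$ repeated forever. So your method can never descend into the essential spectrum; it just reproduces Step~1. This is exactly why the trace argument stops at compact operators.

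\textbf{Two further problems.} The term $\sum_i\langle i[A,B]v_i,v_i\rangle$ is guaranteed to vanish only on genuinely $A$-invariant $V$. You have no estimate, uniform in $\dim V$, on approximately invariant ones. The fallback also fails. Ultrapowers of $\mathcal B(\mathcal H)$ and the Calkin algebra are not finite and carry no faithful trace, so the finite von Neumann algebra case cannot be invoked there. Making approximate eigenvectors exact does not create a trace.

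\textbf{The missing idea} is a trace-free way to use a quadratic relation on a whole infinite-dimensional spectral subspace. The paper proceeds as follows.
\begin{itemize}
\item Set $K=|A|-|T|$, so $0\le K\le|A|$. Squaring exactly gives $B^2+i[A,B]+|A|K+K|A|=K^2$. This is the quadratic consequence of the linear hypothesis you were looking for.
\item Compress to the spectral subspaces of $A$ on which $\pm A>a$, where $\|K\|/2<a<\|K\|$. These subspaces reduce $A$, so the commutator compresses to a commutator and nothing needs to cancel.
\item Using $B_0^2\le(B^2)_0$ and $K^2\le\|K\|K$, one gets $X^2+i[H,X]+HF+FH\le0$. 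Here $X=\pm B_0$, $F=K_0\ge0$ and $H=|A|_0-\tfrac{\|K\|}{2}I\ge\delta I$ with $\delta>0$.
\item Lemma~\ref{lem:inequality} kills this inequality. Solve $HY+YH=X$ and check that $V=I+2iY$ satisfies $V^*HV\le H$. Hence $\delta(I+4Y^2)^n\le\|H\|I$ for all $n$, which forces $Y=0$ and then $F=0$.
\item So $K$ vanishes where $|A|>a$, while elsewhere $K\le|A|\le a<\|K\|$. This is a contradiction, so $K=0$.
\item The same lemma applied to $B^2+i[A,B]\le0$ gives $B=0$.
\end{itemize}
The descent is organized around the top of the spectrum of $K$ rather than of $|A|$, and it is completed in one step.
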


We shall give the proof of Theorem~\ref{thm:main} in the next section.

\section{An operator inequality and Proof of the main theorem}

We recall a standard fact about the equation $HZ+ZH=R$.
If $H=H^*\geq\delta I$ for some $\delta>0$, its unique bounded solution is
\begin{equation}\label{eq:lyapunov}
 Z=\int_0^\infty e^{-tH}R e^{-tH}\,dt;
\end{equation}
see \cite[Theorem~9.2]{BR}. The solution is self-adjoint when $R$ is self-adjoint and is positive
when $R$ is positive. In particular,
\begin{equation}\label{eq:order}
 Z=Z^*,\quad HZ+ZH\leq0\quad\Longrightarrow\quad Z\leq0.
\end{equation}

\begin{lemma}\label{lem:inequality}
Let $H,X,F\in\mathcal B(\mathcal H)$ satisfy
\[
 H=H^*\geq\delta I,\qquad X=X^*,\qquad F\geq0,
 \qquad \delta>0.
\]
If
\begin{equation}\label{eq:lemma-hypothesis}
 X^2+i[H,X]+HF+FH\leq0,
\end{equation}
then $X=F=0$.
\end{lemma}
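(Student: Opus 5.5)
The plan is to neutralise the commutator term by a congruence with $H^{-1/2}$, so that the hypothesis becomes a statement about the real part of an operator similar to $F-iX$. In finite dimensions the idea is transparent. Multiply \eqref{eq:lemma-hypothesis} by $H^{-1}$ and take the trace. The commutator contributes $i\,\mathrm{tr}(X-H^{-1}XH)=0$ and $HF+FH$ contributes $2\,\mathrm{tr}F$. This gives $\mathrm{tr}(H^{-1/2}X^2H^{-1/2})+2\,\mathrm{tr}F\le 0$, which forces $X=F=0$. The infinite-dimensional argument will imitate this without a trace.

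\emph{Step 1 (congruence).} Put $S=H^{-1/2}$ and compress \eqref{eq:lemma-hypothesis} to $S(\cdot)S\le 0$. Set $P=SX^2S\ge0$, $K=H^{1/2}XH^{-1/2}$ and $G=H^{1/2}FH^{-1/2}$. A direct computation gives
\[
 P+i(K-K^*)+G+G^*\le 0 .
\]
With $W=G-iK=H^{1/2}(F-iX)H^{-1/2}$ this reads $W+W^*\le -P\le 0$. So $W$ is dissipative, and $\|e^{tW}\|\le1$ for $t\ge0$. On the other hand, $A:=F-iX$ satisfies $A+A^*=2F\ge0$, so $\|e^{-tA}\|\le 1$ for $t\ge 0$. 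Since $e^{tW}=H^{1/2}e^{tA}H^{-1/2}$, we obtain $\|e^{tA}\|\le\|H^{1/2}\|\,\|H^{-1/2}\|$ for $t\ge0$. Hence $\{e^{tA}\}_{t\in\mathbb R}$ is a uniformly bounded group.

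\emph{Step 2 (energy identities).} For $v\in\mathcal H$, differentiate $\|e^{-tA}v\|^2$ and $\|e^{tW}u\|^2$. This gives
\[
 2\int_0^\infty\|F^{1/2}e^{-tA}v\|^2dt\le\|v\|^2,
 \qquad
 \int_0^\infty\|P^{1/2}e^{tW}u\|^2dt\le\|u\|^2 .
\]
The function $t\mapsto\|e^{-tA}v\|$ is nonincreasing on all of $\mathbb R$, because $A$ is accretive. It is also bounded above and below by multiples of $\|v\|$, because the group is bounded. The same holds for $t\mapsto \|e^{tW}u\|$ with $u=H^{1/2}v$. Combining the two monotone quantities should give a Lyapunov-type quantity that is both nonincreasing and nondecreasing along orbits. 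For example, consider $\|H^{1/2}e^{tA}v\|^2$ versus $\|e^{tA}v\|^2$, or pass to the equivalent Hilbert norm in which the bounded group becomes unitary. That quantity must then be constant, which forces $F^{1/2}e^{-tA}v\equiv0$ and $P^{1/2}e^{tW}u\equiv0$. At $t=0$ this gives $Fv=0$ and $XH^{-1/2}u=0$ for all $u$, hence $F=0$ and $X=0$.

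\emph{Main obstacle.} I expect Step 2 to be the hard part: converting the two one-sided monotonicities into actual constancy along orbits. The finite-dimensional trace argument is unavailable, and a bounded group with accretive generator is not automatically unitary. What I would try first is to exploit the exact relation $e^{tW}=H^{1/2}e^{tA}H^{-1/2}$. Pairing the orbit of $v$ under $e^{tA}$ with the orbit of $H^{1/2}v$ under $e^{tW}$ turns the sum of the two dissipation integrals into a telescoping quantity bounded independently of $t$. Invoking this over $t\in\mathbb R$ rather than $t\ge0$ (using the group property) should then force both integrands to vanish.

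If this fails, the fallback uses the Lyapunov solution \eqref{eq:lyapunov} together with \eqref{eq:order}. Let $Z$ solve $HZ+ZH=X^2+i[H,X]+HF+FH$, so that $Z\le0$. Then compare $Z$ with $F$ plus the explicit Lyapunov integrals of $X^2$ and of $i[H,X]$, aiming to show $F\le -\int_0^\infty e^{-tH}X^2e^{-tH}\,dt$, which yields $F=0$ and then $X=0$.
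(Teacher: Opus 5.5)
\textbf{The gap is in Step 2.} Step 1 is correct once you fix a sign. With $W=G-iK$ you get $W+W^*=G+G^*-i(K-K^*)$, so you need $W=G+iK=H^{1/2}(F+iX)H^{-1/2}$ and $A=F+iX$. Then $W+W^*\le -P$, $A+A^*=2F\ge0$ and $\sup_{t\in\mathbb R}\|e^{tA}\|\le(\|H\|/\delta)^{1/2}$ all hold.

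Step 2 is the entire content of the lemma, and the mechanism you propose cannot supply it. Monotonicity and boundedness of $\|e^{tA}v\|$ and $\|e^{tW}u\|$ do not produce a conserved quantity. Your sketch also never uses that the dissipation $P$ is specifically $H^{-1/2}X^2H^{-1/2}$ with $X=\im A$, and without that the conclusion is false. Let $U$ be the unilateral shift, $H=\re U+2I\ge I$, $X=-\im U$ and $F=0$. Then $i[H,X]=-\tfrac12(I-UU^*)\le0$ is nonzero, so $W$ is dissipative, $A$ is accretive and $e^{tA}=e^{itX}$ is unitary. Yet $X\neq0$, and $\|H^{1/2}e^{tA}v\|$ is genuinely nonconstant.

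The same example defeats integrating over $\mathbb R$ one integrand at a time. Finiteness of $\int_{\mathbb R}\|P^{1/2}e^{tW}u\|^2dt$, or of the $F$-integral, does not force that integrand to vanish.

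Your fallback has the same defect. The Lyapunov integral of $i[H,X]$ is $i[H,Y]$ with $HY+YH=X$, and this is indefinite. So $Z\le0$ only gives $F+\int_0^\infty e^{-tH}X^2e^{-tH}dt\le -i[H,Y]$, not your target. The paper handles exactly this indefinite term differently. It writes $4YHY+2i[H,Y]=V^*HV-H$ with $V=I+2iY$, which gives $V^*HV\le H-2F$. It then iterates, using that $V$ is normal with $V^*V=I+4Y^2$.

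\textbf{How to complete your route.} The missing idea is to combine the two dissipations into a bound on the generator itself. Put $\phi(t)=e^{tA}v$ and $C=\sup_{t\in\mathbb R}\|e^{tA}\|$. On all of $\mathbb R$ we have $\frac{d}{dt}\|\phi\|^2=2\|F^{1/2}\phi\|^2$. We also have $\frac{d}{dt}\langle H\phi,\phi\rangle=\langle (HF+FH+i[H,X])\phi,\phi\rangle\le-\|X\phi\|^2$. Integrating both over $[-T,T]$ and using the bound $C$ gives $\int_{\mathbb R}(\|F^{1/2}\phi\|^2+\|X\phi\|^2)\,dt<\infty$. Since $\|A\phi\|^2\le 2\|F\|\,\|F^{1/2}\phi\|^2+2\|X\phi\|^2$, it follows that $\int_{\mathbb R}\|A\phi(t)\|^2dt<\infty$. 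But $A\phi(t)=e^{tA}Av$ and $\|Av\|\le C\|e^{tA}Av\|$, so the integrand is at least $\|Av\|^2/C^2$ for every $t$. Hence $Av=0$ for all $v$, so $A=0$ and $F=X=0$.

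This is exactly where $X^2$ enters: it controls the skew part of $A$, so together with $F$ the dissipation dominates $\|A\phi\|^2$, and the bounded group keeps $\|A\phi(t)\|$ away from zero. Completed this way, your route avoids the Lyapunov equation and the $V$-iteration entirely. As submitted, however, the decisive step is missing, and the constancy argument you point to would not provide it.
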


\begin{proof}
Let $Y$ be the self-adjoint solution of $HY+YH=X$, and put
\[
 Z=2YHY+i[H,Y]+F.
\]
The identities $$(HY+YH)^2
   =H(2YHY)+(2YHY)H+[H,Y]^*[H,Y],$$ and $$H[H,Y]+[H,Y]H=[H,X];$$
give
\[
 HZ+ZH=X^2+i[H,X]+HF+FH-[H,Y]^*[H,Y]\leq0.
\]
Thus, $Z\leq0$ by \eqref{eq:order}.

\medskip

Now, put $V=I+2iY$. Direct multiplication gives
\[
 V^*HV=H+2i[H,Y]+4YHY=H+2Z-2F\leq H.
\]
By induction, $(V^*)^nHV^n\leq H$ for every integer $n\geq1$, and hence
\[
 \delta(V^*)^nV^n\leq\|H\|I.
\]
Since $V$ commutes with $V^*$ and $V^*V=I+4Y^2$, the binomial formula implies that
\[
 \delta(I+4nY^2)
 \leq\delta(I+4Y^2)^n
 =\delta(V^*)^nV^n
 \leq\|H\|I.
\]
Consequently,
\[
 4n\delta\|Yx\|^2\leq(\|H\|-\delta)\|x\|^2
 \qquad(x\in\mathcal H,\ n\geq1).
\]
It follows that $Y=0;$ and so $X=0$ and $Z=F\leq0$. Since $F\geq0$, we
also have $F=0$.
\end{proof}

\begin{theorem}\label{thm:quadratic}
Let $A,B\in\mathcal B(\mathcal H)$ be self-adjoint, and let
$K\in\mathcal B(\mathcal H)$ be positive. Suppose that, for some
$0<\alpha<2$,
\begin{equation}\label{eq:order-bound}
 K\leq\alpha|A|
\end{equation}
and
\begin{equation}\label{eq:quadratic}
 B^2+i[A,B]+|A|K+K|A|\leq K^2.
\end{equation}
Then, $B=K=0$.
\end{theorem}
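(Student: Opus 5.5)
The plan is to reduce Theorem~\ref{thm:quadratic} to Lemma~\ref{lem:inequality}. The first step is to absorb the term $K^2$ into an anticommutator. Put $H_0=|A|-\tfrac12 K$. Then
\[
 H_0K+KH_0=|A|K+K|A|-K^2,
\]
so \eqref{eq:quadratic} becomes
\[
 B^2+i[A,B]+H_0K+KH_0\leq0 .
\]
By \eqref{eq:order-bound}, $H_0\geq(1-\tfrac{\alpha}{2})|A|\geq0$. The hypothesis $\alpha<2$ is used exactly here. So we have an inequality of the shape \eqref{eq:lemma-hypothesis} with $X=B$ and $F=K\geq0$, except for two defects. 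First, the commutator is taken with $A$ rather than with the positive operator $H_0$. Second, $H_0$ need not be bounded below by a positive constant.

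To fix the commutator, I would split $\mathcal H$ using the spectral projection $P=\chi_{[0,\infty)}(A)$. On $\Ran P$ we have $A=|A|$, and on $\Ran(I-P)$ we have $A=-|A|$. I would pass to $\mathcal H\oplus\mathcal H$ and conjugate by the unitary $\mathrm{diag}(I,-I)$ restricted appropriately. Equivalently, I would replace $B$ by $B_\pm$, formed from the blocks $PBP$, $PB(I-P)$, $\dots$ with signs chosen so that $[A,B]$ becomes $[|A|,\tilde B]$. The aim is an inequality of the form
\[
 \tilde B^2+i[|A|,\tilde B]+H_0K+KH_0\leq0,
\]
possibly after using $B^2\geq$ (block-diagonal compression) to discard cross terms. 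A less structural alternative is to note that the quantity $\langle(\,\cdot\,)x,x\rangle$ for $x$ in the spectral subspaces of $A$ sees $i[A,B]$ and $\pm i[|A|,B]$ identically. One would then combine the two halves by applying the lemma on each half separately.

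For the lower bound, I would replace $|A|$ by $|A|+\varepsilon I$, giving $H_\varepsilon=H_0+\varepsilon I\geq\varepsilon I$. This changes the left side by $2\varepsilon K$, which is not harmful for a fixed $\varepsilon$. I would therefore rerun the argument of Lemma~\ref{lem:inequality} with the error carried along. Concretely, define $Y_\varepsilon$ by $H_\varepsilon Y_\varepsilon+Y_\varepsilon H_\varepsilon=\tilde B$. Then I expect to get
\[
 V_\varepsilon^*H_\varepsilon V_\varepsilon\leq H_\varepsilon+O(\varepsilon),
\]
and the iteration step should still force $\|Y_\varepsilon x\|$ to be small uniformly. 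A cleaner route may be to restrict everything to $\overline{\Ran}|A|$ and use $K\leq\alpha|A|$, which makes $K$ vanish on $\ker A$. Inequality \eqref{eq:quadratic} compressed to $\ker A$ then gives $B^2\leq0$ there, so $B$ maps $\ker A$ to $0$. Once the lemma applies, it yields $\tilde B=0$ and $K=0$, hence $B=0$.

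The main obstacle is the commutator mismatch between $[A,B]$ and $[|A|,\cdot]$. The off-diagonal blocks $PB(I-P)$ flip sign under the replacement, and the compression of $B^2$ only gives an inequality in the favourable direction for the diagonal blocks. If the block trick fails, the fallback is to redo the proof of Lemma~\ref{lem:inequality} directly with $H=H_0$ and the modified Sylvester equation $AY+YA\,\text{sgn}$-twisted. Failing that, I would use the conjugation $V=I+2iY$ with $Y$ solving $H_0Y+YH_0=B$ and check by hand how $i[A,Y]$ enters $V^*H_0V$.
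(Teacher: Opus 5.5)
There is a genuine gap, and it starts with your first step, the absorption $H_0=|A|-\tfrac12K$. The identity is correct, but Lemma~\ref{lem:inequality} needs the \emph{same} operator in the commutator $i[H,X]$ and in the anticommutator $HF+FH$. Suppose your sign fix worked. You would then have $i[|A|,\tilde B]$, whereas $i[H_0,\tilde B]=i[|A|,\tilde B]-\tfrac{i}{2}[K,\tilde B]$. The extra term is an indefinite self-adjoint operator, because $K$ need not commute with $A$ or $B$. So your target inequality $\tilde B^2+i[|A|,\tilde B]+H_0K+KH_0\le0$ is still not of the lemma's form.

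The sign fix itself only works after compressing to spectral subspaces of $A$. Write $A=A_1\oplus(-A_2)$ with $A_1,A_2\ge0$ relative to $\Ran P\oplus\Ran(I-P)$. The off-diagonal block of $[A,B]$ is $A_1B_{12}+B_{12}A_2$, and no choice of signs turns it into $A_1\tilde B_{12}-\tilde B_{12}A_2$. Compression is harmless for $B^2$, since $(B^2)_0\ge B_0^2$. It is fatal for your quadratic absorption, however: for a projection $P$ commuting with $A$, $PK^2P=(PKP)^2+PK(I-P)KP\ge(PKP)^2$, which is the wrong direction.

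The $\varepsilon$-shift does not produce $O(\varepsilon)$ errors either. The defect $2\varepsilon K$ passes through the solution map $R\mapsto\int_0^\infty e^{-tH_\varepsilon}Re^{-tH_\varepsilon}\,dt$, whose norm can be of order $\varepsilon^{-1}$. So the error in $V_\varepsilon^*H_\varepsilon V_\varepsilon$ is of order one, and the lemma's final estimate degenerates as $\delta=\varepsilon\to0$. Restricting to $\overline{\Ran}\,|A|$ does not help, since $|A|$ need not be bounded below there.

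The missing idea is to linearize $K^2$ with a scalar. If $k=\|K\|>0$, then $K^2\le kK$, and this inequality survives compression. The resulting shift $H=|A|-\tfrac k2I$ differs from $|A|$ by a multiple of $I$, so its commutators are those of $|A|$. You then need $H>0$ only where $|A|$ is large. Choose $k/2<a<k/\alpha$; this is exactly where $\alpha<2$ enters. Compress the hypothesis to the spectral subspaces $\mathcal M_{\pm1}$ of $A$ on which $\pm A\ge a$. There $A=\pm|A|$, which disposes of the sign problem, and $H\ge(a-\tfrac k2)I$. The compressed inequality reads $X^2+i[H,X]+HK_0+K_0H\le0$ with $X=\pm B_0$. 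The lemma gives $K_0=0$, so $K$ vanishes on $\mathcal M_1\oplus\mathcal M_{-1}$. On the complement $|A|\le a$, so $K\le\alpha|A|$ gives $\|K\|\le\alpha a<k$, a contradiction. Hence $K=0$. Then $B^2+i[A,B]\le0$, and the lemma with $H=A+(\|A\|+1)I$ and $F=0$ gives $B=0$.
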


\begin{proof}
Put $D=|A|$. Suppose that $k=\|K\|>0$, and choose $a$ such that
\begin{equation}\label{eq:threshold}
 \frac{k}{2}<a<\frac{k}{\alpha}.
\end{equation}
Since $K\geq0$, we have $K^2\leq kK$.

For each $s\in\{1,-1\}$, put
\[
 R=sA-aI,\qquad
 \mathcal M_s=\overline{\Ran(|R|+R)}.
\]
The operators $R$ and $|R|$ commute with $|R|+R$, so $\mathcal M_s$
reduces $A$ and hence also $D=|A|$. The relations
\[
 (R-|R|)(R+|R|)=0,\qquad
 \mathcal M_s^\perp=\ker(R+|R|)
\]
show that $R=|R|$ on $\mathcal M_s$ and $R=-|R|$ on
$\mathcal M_s^\perp$. Thus,
\begin{equation}\label{eq:subspaces}
 sA|_{\mathcal M_s}\geq aI,
 \qquad sA|_{\mathcal M_s^\perp}\leq aI.
\end{equation}
Since $sA|_{\mathcal M_s}$ is positive, uniqueness of the positive
square root gives
\[
 A|_{\mathcal M_s}=sD|_{\mathcal M_s},
 \qquad D|_{\mathcal M_s}\geq aI.
\]

Fix $s$ and write $\mathcal M=\mathcal M_s$. There is nothing to prove
on this subspace if $\mathcal M=\{0\}$. Otherwise, with respect to
$\mathcal H=\mathcal M\oplus\mathcal M^\perp$, let $S_0$ denote the
upper-left block of an operator $S$. In particular,
\[
 A_0=sD_0,\qquad D_0\geq aI,\qquad B_0=B_0^*,\qquad K_0\geq0.
\]
For $x\in\mathcal M$, we have $\|B_0x\|\leq\|Bx\|$, so
$B_0^2\leq(B^2)_0$. Also, $(K^2)_0\leq kK_0$.
Taking the upper-left block in \eqref{eq:quadratic} therefore gives
\begin{equation}\label{eq:compression}
 B_0^2+i[A_0,B_0]+D_0K_0+K_0D_0-kK_0\leq0.
\end{equation}

Now, put
\[
 H=D_0-\frac{k}{2}I,\qquad X=sB_0.
\]
Then, $H\geq(a-k/2)I>0$, and \eqref{eq:compression} becomes
\[
 X^2+i[H,X]+HK_0+K_0H\leq0.
\]
Lemma~\ref{lem:inequality}, applied on $\mathcal M$, gives $K_0=0$.
For $x\in\mathcal M$ it follows that
\[
 \|K^{1/2}x\|^2=\langle Kx,x\rangle
 =\langle K_0x,x\rangle=0.
\]
Thus, $K$ vanishes on both $\mathcal M_1$ and $\mathcal M_{-1}$.

Let
\[
 \mathcal N=(\mathcal M_1+\mathcal M_{-1})^\perp.
\]
This subspace reduces $A$ and $D$. Since $K$ is self-adjoint and
vanishes on $\mathcal M_1+\mathcal M_{-1}$, it is zero on
$\mathcal N^\perp$ and leaves $\mathcal N$ invariant.
On $\mathcal N$, the two inequalities in \eqref{eq:subspaces} give
$-aI\leq A\leq aI$, and hence $0\leq D\leq aI$ on this subspace.
By \eqref{eq:order-bound}, $0\leq K\leq\alpha aI$ on $\mathcal N$.
As $K$ is zero on $\mathcal N^\perp$, the same bound holds on
$\mathcal H$. Consequently,
\[
 k=\|K\|\leq\alpha a<k,
\]
a contradiction. Therefore, we must have $K=0$.

Now \eqref{eq:quadratic} reads $B^2+i[A,B]\leq0$.
With $H=A+(\|A\|+1)I$, we have $H\geq I$ and $[H,B]=[A,B]$.
Lemma~\ref{lem:inequality}, with $X=B$ and $F=0$, gives $B=0$.
\end{proof}

We now deduce our main result from Theorem~\ref{thm:quadratic} by taking
$K=|\re T|-|T|$.

\begin{proof}[Proof of Theorem~\ref{thm:main}]
Write $T=A+iB$ with $A=A^*$ and $B=B^*$, and put $D=|A|$ and
$K=D-|T|$. Then, $0\leq K\leq D$ and $D^2=A^2$. We have
\[
 \begin{aligned}
 A^2+B^2+i[A,B]&=T^*T=|T|^2\\
 &=(D-K)^2=A^2-DK-KD+K^2.
 \end{aligned}
\]
Thus, $B^2+i[A,B]+DK+KD=K^2$.
Theorem~\ref{thm:quadratic}, with $\alpha=1$, gives $B=K=0$,
and therefore $T=T^*$.
\end{proof}

\medskip

\section*{Declarations}

\noindent\textbf{Competing interests.}
The authors declare that they have no competing interests.

\smallskip
\noindent\textbf{Authors Contributions.}
Mohamed Amine Aouichaoui, Fuad Kittaneh, and Yicen Ma contributed equally to this work.

\smallskip
\noindent\textbf{Funding.}
The authors received no funding.

\smallskip
\noindent\textbf{Data availability.}
No datasets were generated during the current study.

\end{document}